\def\mytitle{Approximately Optimal Controllers for Quantitative\texorpdfstring{\\}~Two-Phase Reach-Avoid Problems on Nonlinear Systems}
\def\myname{Alexander Weber and Alexander Knoll}
\def\mykeywords{Symbolic Optimal Control, Abstractions, Reach-Avoid Problem, Parcel delivery, aircraft routing mission}
\def\arxivVersion{}
\ifx\arxivVersion\undefined
\else
\documentclass[journal,a4paper,onecolumn,12pt]{ieeeconf}
\usepackage{geometry}
\geometry{a4paper,left=25mm,right=25mm, top=25mm, bottom=25mm}
\fi
\IEEEoverridecommandlockouts
\pdfminorversion=4
\bibliographystyle{IEEEtran}
\usepackage{cite}
\usepackage[english]{babel}
\usepackage{amsmath}
\usepackage{amssymb}

\usepackage{amsthm}
\usepackage{stmaryrd}
\usepackage{tikz}
\usetikzlibrary{calc,shapes,arrows,automata}
\usepackage{algorithm}
\usepackage{algpseudocode}
\algnewcommand\algorithmicinput{\textbf{Input:}}
\algnewcommand\Input{\item[\algorithmicinput]}
\usepackage[]{caption}
\captionsetup[figure]{labelfont=footnotesize,textfont=footnotesize}
\captionsetup[table]{labelfont=footnotesize,textfont=footnotesize}
\swapnumbers

\newtheoremstyle{rem}{\topsep}{\topsep}{\normalfont}{0pt}{\bfseries}{.}{ }{\thmname{#1 }\thmnumber{#2}\thmnote{ \textup{(#3)}}}
\newtheorem{definition}{Definition}[section]
\newtheorem{theorem}{Theorem}[section]
\newtheorem{proposition}{Proposition}[section]

\theoremstyle{rem}

\makeatletter
\def\endexample{\popQED\@endtheorem}
\def\begriff#1%
{\@nomath\begriff\ifdim\fontdimen\@ne\font>\z@%
\textbf{#1}\else\textit{#1}\fi}
\def\intcc#1{\ensuremath{\left[#1\right]}}
\def\intco#1{\ensuremath{\left[#1\right[}}

\makeatother
\newcommand{\R}{\mathbb{R}}
\newcommand{\Z}{\mathbb{Z}}
\def\implies{\relax\ifmmode\mathrel{\Rightarrow}\else$\implies$ \fi}
\usepackage{paralist}
\usepackage{hyperref}
\usepackage{url}
\hypersetup{
colorlinks=true,%
breaklinks=true,%
pdfdisplaydoctitle=true,%
linkcolor={black},%
citecolor={black},
urlcolor={blue},
pdfstartview={FitV},%
pdftitle={\mytitle},
pdfauthor={\myname},
pdfsubject={Submitted to arXiv on \today.},
pdfkeywords={\mykeywords}
}
\graphicspath{{figures/}}
\ifx\arxivVersion\undefined
\else
\title{\bf \Large \mytitle}
\author{
\myname
\thanks{
The authors are with the
Munich University of Applied Sciences,
Dept. of Mechanical, Automotive and Aeronautical Eng.,
80335 M\"unchen, Germany. Corresponding author: A. Weber, \texttt{weber13@hm.edu}
}%
\thanks{This work has been supported by the German Federal Ministry of Education and Research (Project ARCUS). %
 }%
}%
\fi
\begin{document}
\maketitle
\ifx\arxivVersion\undefined
\else
\thispagestyle{empty}
\fi
\begin{abstract}
The present work deals with 
quantitative two-phase reach-avoid problems 
on nonlinear control systems. 
This class of optimal control problem requires the plant's state 
to visit two (rather than one) target sets in succession 
while minimizing a prescribed cost functional. 
As we illustrate, the naive approach, which subdivides 
the problem into the two evident classical reach-avoid tasks, 
usually does not result in an optimal solution. 
In contrast,
we prove that an optimal controller 
is obtained by consecutively solving two special 
quantitative reach-avoid problems.
In addition, we present a fully-automated method 
based on Symbolic Optimal Control 
to practically synthesize for the considered problem class
approximately optimal controllers for sampled-data nonlinear plants. 
Experimental results on parcel delivery and on an 
aircraft routing mission confirm the practicality of our method.
\end{abstract}
\section{Introduction}
\label{s:introduction}
Cyber-physical systems have become 
a highly promising engineering technology 
which requires for its implementation 
expertise from many different 
disciplines like 
electrical and mechanical engineering, 
physics or computer sciences. 
This new concept to control a physical entity 
is based on feedback laws, which should be 
designed such that the physical process is controlled 
in a ``reliable" and ``efficient" way \cite{LiuPengWangYaoLiu17}. 
Reliability and efficiency for closed-loop systems 
are each a challenging requirement in itself. 

Abstraction-based controller synthesis 
\cite{Tabuada09,
Schmuck15,
ReissigWeberRungger17,
Weber18} has been developed to tackle
the \emph{reliability} requirement for 
feedback controllers on nonlinear systems.
The said approach is a fully-automated scheme 
for synthesizing controllers that come with 
formal guarantees to meet the specification.
Symbolic Optimal Control 
\cite{RooMazo13,
ReissigRungger13,
ReissigRungger18} 
is a recent extension which aims at 
adding \emph{efficiency}
to the evolution of the closed loop within 
the meaning of minimizing a given cost functional.

Aforementioned synthesis approach contains 
two demanding steps with the first being the 
computation of a so-called discrete abstraction and 
the second being the solution of 
an auxiliary discrete problem.
Though experts in this field proudly emphasize that 
\emph{complex} control problems can be solved 
\cite{Schmuck15,ReissigWeberRungger17} 
concrete algorithms (for performing the second step) 
have been presented so far only 
for a few classes of control problems. 
Most works investigate safety problems 
\cite{Girard10,
MeyerGirardWitrant18,
HsuMajumdarMallikSchmuck18},
classical reach-avoid problems
\cite{
MazoTabuada11,
KhaledKimArcakZamani19,
MacoveiciucReissig19,
WeberKreuzerKnoll20} 
or reach-and-stay problems \cite{LiLiu18}. 
Some articles like 
\cite{
GrueneJunge08,
RunggerStursberg12,
EqtamiGirard18}
put emphasis on 
quantitative variants
of previous problems, that is, 
additionally consider optimization of costs.
So-called \emph{sequencing} of targets, 
which requires
the plant's state to 
visit $N$ target sets in a given order, 
is considered in \cite{ReissigWeberRungger17}
for the case $N=2$ and in
\cite{FainekosKressGazitPappas05,
FainekosGirardKressGazitPappas09}
for the case of integrator dynamics.

In this work, 
we consider a quantitative variant of sequencing 
for the case $N=2$, 
which we name 
\begriff{quantitative two-phase reach-avoid problem}. 
This specification requires the plant's state to 
visit two target sets in succession 
while minimizing a prescribed cost functional. 
(Obstacles should also be avoided.) 
In contrast to 
\cite{FainekosKressGazitPappas05,
FainekosGirardKressGazitPappas09,
ReissigWeberRungger17} 
we aim at solving the problem \emph{optimally} 
with respect to a rather arbitrary cost functional.
An example of such a problem 
already appeared 
in the preceding work \cite{WeberKreuzerKnoll20}.

To give a concrete example at this point, 
consider Fig.~\ref{fig:intro}: 
A delivery vehicle 
located in a Manhattan-like city 
has to make a delivery firstly in 
the green area (``Area 1") and 
then in the red area (``Area 2"). 
The question we address in this work 
is how to synthesize a controller 
fulfilling this specification and 
additionally operating for minimum cost 
in the sense of worst-case (or min-max) cost. 
The said problem could be naively solved 
by solving two ordinary (quantitative) 
reach-avoid problems as it was done in 
\cite{ReissigWeberRungger17} 
for the special case of time-optimality. 
More concretely, the first problem is 
to steer the vehicle from Area 1 to Area 2.
The second problem is to steer the vehicle from 
its initial state to the subset inside Area 1 on 
which the first controller is successful. 
Doing so, two controllers, each (approximately) optimal 
for the respective subproblem, 
have been synthesized,
which steer the vehicle along 
the highly suboptimal path 
indicated in Fig.~\ref{fig:intro}. 
The reason for the poor performance is that 
the manoeuvre to reach \mbox{Area 1} optimally
results in an orientation of the vehicle that
is disadvantageous for reaching \mbox{Area 2} next. 
\ifx\arxivVersion\undefined
\else
\begin{figure}
\begin{center}
\hspace*{1.3cm}
\input{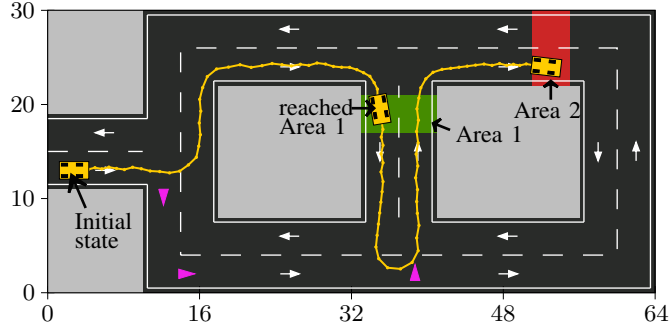}
\end{center}
\caption{\label{fig:intro}Motivating example: A delivery vehicle has to make a delivery firstly in Area 1 and then in Area 2. A naive solution of this two-phase reach-avoid problem results in the highly inefficient trajectory that is depicted. Directions of the optimal trajectory are indicated by the purple arrows. All details to this example are given in Section \ref{ss:examples:vehicle}.}
\end{figure}
\fi

In this paper, we prove that an optimal controller can be 
synthesized through solving two special quantitative
reach-avoid problems.
Moreover, we present a fully-automated method
based on aforementioned Symbolic Optimal Control to practically
synthesize approximately optimal controllers on 
sampled versions of plants governed by 
the $n$-dimensional differential inclusion
\begin{equation}
\label{e:system:cont}
\dot \xi(t) \in f(\xi(t),u(t)) + W.
\end{equation}
Here, $\xi$ is the $n$-dimensional state signal, 
$u$ is the input signal, 
$f$ is an ordinary function and 
$W$ is an $n$-dimensional set representing disturbances 
acting on the plant \cite{ReissigWeberRungger17}.

The paper is subsequently structured as follows.
Basic concepts and definitions used throughout the paper 
are introduced in Section \ref{s:prelim}. 
The main results on the synthesis 
of optimal controllers are 
given in Section \ref{s:main}. 
In Section \ref{s:examples} 
experimental results are
presented and in particular 
the introductory example 
is investigated in all detail. 
Conclusions and open problems 
are discussed in Section \ref{s:conclusions}.
\section{Preliminaries}
\label{s:prelim}
This work follows to a large extend the terminology of \cite{ReissigRungger13,ReissigRungger18}. 
For self-consistency, definitions used in this work are reviewed 
in this section, sometimes in an appropriately modified variant.
More concretely, 
the following five subsections include basic notation, 
the concept of transition system and closed loop,
the considered class of optimal control problems, 
the used notion of optimality and lastly a review
of the classical reach-avoid problem formulation.
\subsection{Basic notation}
\label{ss:notation}
The empty set is denoted by $\emptyset$. 
$A \setminus B$ denotes the difference of two sets $A$ and $B$. 
The cardinality
of a set $A$ will be denoted by $|A|$.
The symbols $\mathbb{Z}$, $\mathbb{R}$ and $\mathbb{Z}_+$, $\mathbb{R}_+$ 
stand for the set of integers and real numbers, respectively, 
and the respective subsets of non-negative elements.
For $a,b \in \R$ the closed and half-open interval 
with endpoints $a$ and $b$ is denoted by $\intcc{a,b}$ and $\intco{a,b}$, respectively. 
The symbols $\intcc{a;b}$ 
and $\intco{a;b}$ denote the discrete intervals, e.g. $\intcc{a;b} = \intcc{a,b} \cap \Z$.
For a function $f \colon A \to B$ the restriction of $f$ to a subset $C$ of $A$ 
is denoted by $f|_C$ \cite[p.4]{Hungerford74}. 
For $f\colon A \times B \to C$ and $b \in B$ the function $f(\cdot,b) \colon A \to C$ maps $a \in A$ to $f(a,b)$.
A set-valued map with domain $A$ and image the subsets of $B$ is denoted by $A \rightrightarrows B$ \cite{RockafellarWets09}. 
A set-valued map $f \colon A \rightrightarrows B$ is \begriff{strict} if $f(a) \neq \emptyset$ for all $a \in A$.
The symbol $f\circ g$ stands for the composition of the maps $f$ and $g$.
For sets $A$, $B$ the symbol $A^{B}$ stands for the set of all maps $B \to A$. 
An element of $A^{\Z_+}$ is called \begriff{signal}. 
For $f\in A^{\Z_+}$ and $\tau \in \Z_+$ the symbol $\sigma^{\tau}f$ stands 
for the backwards shifted signal $g\in A^{\Z_+}$ defined by $g(t) = f(t + \tau)$.
\subsection{System, controller and closed loop}
\label{ss:definitions}
Let $X$ and $U$ be non-empty sets. 
A \begriff{(transition) system} with \begriff{state space} $X$ and \begriff{input space} $U$ is a triple
\begin{equation}
\label{e:system}
(X,U,F),
\end{equation}
where $F\colon X \times U \rightrightarrows X$ is 
a strict set-valued map (called \begriff{transition function}).
System \eqref{e:system} is implicitly endowed with time-discrete dynamics 
implied by the difference inclusion 
\begin{equation}
\label{e:discrete:dynamics}
x(t+1) \in F(x(t),u(t)).
\end{equation}
The \begriff{behaviour} of 
a system $S$ of the form \eqref{e:system} 
\begriff{initialized at $p \in X$} 
is the set of all signal pairs 
$(u,x) \in (U \times X)^{\Z_+}$ such that 
$x(0) = p$ and
\eqref{e:discrete:dynamics} holds for all $t \in \Z_+$. 
This set is denoted by $\mathcal{B}_p(S)$.

The concept of a closed loop is formalized 
by means of a controller and 
the respective closed-loop behaviour:
A \begriff{controller} 
is a strict set-valued map \cite[Eq.~4]{ReissigRungger13}
\begin{equation*}
\label{e:controller}
\mu \colon \bigcup_{T\in \Z_+} (X^{\intcc{0;T}} \times U^{\intco{0;T}}) \rightrightarrows U \times \{0,1\},
\end{equation*}
where the second component of the image reports 
whether the controller is in operation ('$0$') or 
has stopped operating ('$1$') \cite[Sec.~III.A]{ReissigRungger13}.
The set of all such maps is denoted by $\mathcal{F}(X,U)$. 
A controller $\mu \in \mathcal{F}(X,U)$ is called \begriff{memoryless} 
if a representation as a map 
$\mu \colon X \rightrightarrows U \times \{0,1\}$ is possible. 
The subset of memoryless controllers is denoted by $\mathcal{F}_0(X,U)$. 

The \begriff{closed-loop behaviour} 
of a system $S$ of the form \eqref{e:system} 
interconnected with $\mu \in \mathcal{F}(X,U)$ 
and initialized at $p \in X$ 
is the set of all signals $(u,v,x)$ 
that satisfy $(u,x) \in \mathcal{B}_p(S)$ and\looseness=-1 
\[
(u(t),v(t)) \in \mu(x|_{\intcc{0;t}},u|_{\intco{0;t}})
\] 
for all $t \in \Z_+$. 
The symbol $\mathcal{B}_p(\mu \times S)$ stands for this set.
\subsection{Optimal control problem}
\label{ss:ocp}
As explained later, this work considers slightly 
more general optimal control problems than formulated in \cite{ReissigRungger18}. 
Common with \cite{ReissigRungger18} 
is that the closed loop is \begriff{leavable}, 
i.e. the controller may stop 
its operation at any time. Moreover,
stopping will be mandatory.
Also, optimality is meant 
in terms of an accumulation of two kinds of costs that accrue
for the operation of the closed loop. Costs may be infinite, 
which allows to formulate hard state and input constraints. 
Firstly, as usually in optimal control,
there is a \begriff{running cost}
\begin{equation}
\label{e:runningcost}
g \colon X \times X \times U \to \mathbb{R}_+ \cup \{ \infty \}
\end{equation}
accruing between two consecutive points in time. 
Secondly, the closed-loop trajectory 
is rated once at stopping time by a \begriff{trajectory cost} 
\begin{equation}
\label{e:trajectorycost}
G \colon \bigcup_{T \in \Z_+} X^{\intcc{0;T}} \to \mathbb{R}_+ \cup \{ \infty \}.
\end{equation}
Altogether, for a signal pair $(u,x) \in \mathcal{B}_p(S)$ and a 
stopping signal $v \in \{0,1\}^{\Z_+}$ 
we assign a \begriff{total cost} 
\begin{subequations}
\label{e:costfunctional}
\begin{equation}
\label{e:costfunctional:declaration}
J \colon (U \times \{0,1\} \times X)^{\Z_+} \to \R_+ \cup \{ \infty \}
\end{equation}
defined by %
\begin{equation}
\label{e:costfunctional:definition}
J(u,v,x) = \sum_{t = 0}^{T-1} g(x(t),x(t+1),u(t)) + G(x|_{\intcc{0;T}})
\end{equation}
\end{subequations}
for $T = \min v^{-1}(1)$ if $v \neq 0$, and $J(u,v,x) = \infty$ if \mbox{$v = 0$}. 
(Note that the last case formalizes the requirement for stopping, 
which occurs at the first 0-1 edge in $v$.)

By putting together the previous objects 
we arrive at the subsequent definition.
\begin{definition}
\label{def:ocp}
Let $(X,U,F)$ be a system, 
and $G$ and $g$ be as in \eqref{e:trajectorycost} and \eqref{e:runningcost}, 
respectively. The tuple
\begin{equation}
\label{e:ocp}
(X, U, F, G, g)
\end{equation}
is called \begriff{optimal control problem}.
\end{definition}
Definition \ref{def:ocp} agrees with \cite[Def.~III.3]{ReissigRungger18}
if the trajectory cost in \eqref{e:trajectorycost} possesses the special form 
\begin{equation}
\label{e:terminalcost}
G \colon X \to \mathbb{R}_+ \cup \{ \infty \}.
\end{equation} 
In this case, the map in \eqref{e:terminalcost} is called \begriff{terminal cost} \cite{ReissigRungger18}.
\subsection{Solution of an optimal control problem}
In order to introduce the notion of solution 
of an optimal control problem
we introduce the following quantity \cite{ReissigRungger18}.
\begin{definition}
Let $\Pi$ be an optimal control problem 
of the form \eqref{e:ocp} and 
let $J$ be as in \eqref{e:costfunctional}. 
The \begriff{closed-loop performance} of $\Pi$
is the map assigning $(p,\mu) \in X \times \mathcal{F}(X,U)$
to 
\begin{equation}
\label{e:closedloopperformance}
\sup_{(u,v,x) \in \mathcal{B}_p(\mu \times S)} J(u,v,x),
\end{equation}
where $S= (X,U,F)$.
\end{definition}
Having introduced the quantity \eqref{e:closedloopperformance} 
the concept of \begriff{value function} can be formulated 
as follows \cite[Sec.~III.A,VI.C]{ReissigRungger18}.
\begin{definition}
Let $\Pi$ be an optimal control problem of the form \eqref{e:ocp}. 
Denote by $L(p,\mu)$ the value \eqref{e:closedloopperformance}.
The \begriff{value function} of $\Pi$ is the map 
$V \colon X \to \mathbb{R}_+ \cup \{\infty\}$ defined by 
\begin{equation}
V(p) = \inf_{\mu \in \mathcal{F}(X,U)} L(p,\mu).
\end{equation}
A controller $\mu \in \mathcal{F}(X,U)$ is \begriff{optimal} for $\Pi$ 
if $V = L(\cdot,\mu)$.
Such a pair $(V,\mu)$ is called \begriff{solution} of $\Pi$. 
\end{definition}
So, an optimal controller minimizes the worst-case costs.
\subsection{The classical reach-avoid problem}
The classical (qualitative) reach-avoid problem is 
the problem of steering the state signal of a system $S$ of the form 
\eqref{e:system} into a \begriff{target set} $A \subseteq X$ 
while avoiding an \begriff{obstacle set} $O \subseteq X$. 
This specification can be easily formulated as 
an optimal control problem \eqref{e:ocp}, 
cf. \cite[Ex.~III.5]{ReissigRungger18}: 
\\
Define the trajectory cost $G$ as 
a terminal cost \eqref{e:terminalcost} by 
\begin{equation}
\label{e:reachavoid:terminalcost}
G(p) = \begin{cases} 
0, & \text{if } p \in A \\
\infty, & \text{otherwise}
\end{cases}
\end{equation}
and define the running cost $g$ in \eqref{e:runningcost} by 
\begin{equation*}
g(x,y,u) = \begin{cases} 
0, & \text{if } x \in X \setminus O \\
\infty, & \text{otherwise}
\end{cases}.
\end{equation*}
An optimal controller is ``successful" for every state signal 
that is initialized at a point in $X \setminus V^{-1}(\infty)$, 
where $V$ stands for the value function of the 
defined optimal control problem.

Clearly, a quantitative version of a reach-avoid problem is obtained 
when allowing $G$ and $g$ to be non-constant instead of $0$ 
on the respective subsets. 
(Examples can be found in 
\cite{RunggerStursberg12,WeberKreuzerKnoll20}.) 
We formalize this version 
in the next definition, 
which already indicates 
that for the upcoming contributions 
the key object will be the trajectory cost 
(rather than the running cost)
of an optimal control problem. 
\begin{definition}
\label{def:reachavoid}
Let $\Pi$ be of the form \eqref{e:ocp} such that 
$G$ satisfies \eqref{e:reachavoid:terminalcost} with $G_0(p)$ in place of $0$, 
where $G_0 \colon X \to \mathbb{R}_+ \cup \{ \infty\}$ and 
$A \subseteq X$ is non-empty.
Then $\Pi$ is called \begriff{(quantitative) reach-avoid problem} 
associated with $A$ and $G_0$.
\end{definition}

We extend in this work Definition \ref{def:reachavoid} 
to a class of problems
that contains an additional 
intermediate goal - a ``stopover set", as explained below.
\section{Quantitative 2-phase reach-avoid problems}
\label{s:main}
We divide the presentation of our main contributions into three parts. 
Two-phase reach-avoid problems are rigorously 
defined in Section \ref{ss:main:definition}. 
In Section \ref{ss:main:statements} we deduce the structure of an optimal 
controller for this class of problems. 
A constructive method to synthesize 
approximately optimal controllers on sampled-data systems 
is given in Section \ref{ss:main:approximatesolution}.
\subsection{Problem definition}
\label{ss:main:definition}
We are going to investigate 
the following class of 
optimal control problem.

\begin{definition}
\label{def:tworeachavoid}
Let $\Pi$ be an optimal control problem 
of the form \eqref{e:ocp} such that
$G$ is defined by
\begin{equation}
\label{e:tworeachavoid}
G(x|_{\intcc{0;t}}) = \begin{cases} 
G_0(x(t)), & \text{if } 
\exists_{s \leq t} : x(s) \in A_1 \wedge x(t) \in A_2 \\
\infty, &  \text{otherwise}
\end{cases},
\end{equation}
where $G_0 \colon X \to \mathbb{R}_+ \cup \{ \infty\}$ and 
$A_1, A_2 \subseteq X$ are non-empty sets.
Then $\Pi$ is called \begriff{(quantitative) two-phase reach-avoid problem} associated with $A_1$, $A_2$ and $G_0$.
\end{definition} 
Thus, for a two-phase reach-avoid problem stopping may occur only if firstly $A_1$ is visited and then $A_2$ is reached. 

\subsection{Structure of an optimal controller}
\label{ss:main:statements}

Our first result below defines a controller 
for a two-phase reach-avoid problem
whose closed-loop performance is equal to 
the one of a (memoryless) controller 
for a special 
classical reach-avoid problem.
The defined controller 
structure will turn out to be 
appropriate for the desired 
optimal controller.

\begin{proposition}
\label{p:closedloopperformance}
Let $\Pi$ be a 
two-phase reach-avoid problem 
of the form \eqref{e:ocp} 
associated with $A_1$, $A_2$ and $G_0$.
Let 
\mbox{$\Pi_2 = (X,U,F,G_2,g)$}
be the reach-avoid problem associated with $A_2$ and $G_0$.
Let $\mu_2 \in \mathcal{F}_0(X,U)$ and let
$
\Pi_1=(X,U,F,G_1,g)
$ 
be the reach-avoid problem associated with $A_1$ and $L_2(\cdot, \mu_2)$,
where $L_2$ denotes the closed-loop performance of $\Pi_2$. 
Let $\mu_1 \in \mathcal{F}_0(X,U)$ and consider
$\mu \in \mathcal{F}(X,U)$ defined by
\begin{equation}
\label{e:closedloopperformance:controller}
\mu(x|_{\intcc{0;t}},u|_{\intco{0;t}}) = \begin{cases} 
\mu_1(x(t)), & \text{if } \ w(\intcc{0;t}) = \{0\}   \\
\mu_2(x(t)), & \text{otherwise}
\end{cases},
\end{equation}
where $w := (\mu_1 \circ x)_2$ is the stopping signal of $\mu_1$.
Then \[L(p,\mu) = L_1(p,\mu_1)\] for all $p \in X$, 
where $L$ and $L_1$ denote the closed-loop performance of $\Pi$ and $\Pi_1$, respectively.
\end{proposition}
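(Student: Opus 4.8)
The plan is to fix $p \in X$ and prove the two inequalities $L(p,\mu) \le L_1(p,\mu_1)$ and $L(p,\mu) \ge L_1(p,\mu_1)$ separately, throughout using that the running cost $g$ is common to $\Pi$, $\Pi_1$, $\Pi_2$, that $\mu_2$ is memoryless (so $\mathcal{B}_{\cdot}(\mu_2 \times S)$ is shift-invariant), and that, by Definition~\ref{def:reachavoid}, $G_1(q) = L_2(q,\mu_2)$ for $q \in A_1$ and $G_1(q) = \infty$ otherwise, while $G_2(q) = G_0(q)$ for $q \in A_2$ and $G_2(q) = \infty$ otherwise. The first step is to read off the phase structure of $\mu$: along any $(u,v,x) \in \mathcal{B}_p(\mu \times S)$ set $T_1 := \min w^{-1}(1)$ with $w = (\mu_1 \circ x)_2$; then for $t < T_1$ the closed loop evolves as under $\mu_1$ with $v(t) = 0$, from $t = T_1$ on it evolves as under $\mu_2$, and the overall stopping time $T := \min v^{-1}(1)$ satisfies $T \ge T_1$. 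If $w \equiv 0$ then $v \equiv 0$, $J(u,v,x) = \infty$, and the same signal lies in $\mathcal{B}_p(\mu_1 \times S)$, so $L_1(p,\mu_1) = \infty$ as well; this degenerate case being settled, I assume $T_1 < \infty$ henceforth.

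For $L(p,\mu) \le L_1(p,\mu_1)$, I would take $(u,v,x) \in \mathcal{B}_p(\mu \times S)$ and split the total cost at the switching instant:
\[
J(u,v,x) = \sum_{t=0}^{T_1-1} g(x(t),x(t+1),u(t)) + \Bigl( \sum_{t=T_1}^{T-1} g(x(t),x(t+1),u(t)) + G(x|_{\intcc{0;T}}) \Bigr).
\]
The crux is the identity $G(x|_{\intcc{0;T}}) = G_2(x(T))$, valid whenever $x(T_1) \in A_1$, since then $A_1$ has already been visited by time $T \ge T_1$ and the side condition in \eqref{e:tworeachavoid} collapses to $x(T) \in A_2$. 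With it, the bracketed term is exactly the total cost that $\Pi_2$ assigns to $\sigma^{T_1}(u,v,x)$, and by memorylessness of $\mu_2$ this shifted signal lies in $\mathcal{B}_{x(T_1)}(\mu_2 \times S)$; hence the bracket is $\le L_2(x(T_1),\mu_2) = G_1(x(T_1))$. On the other hand, the prefix $(x|_{\intcc{0;T_1}},u|_{\intco{0;T_1}})$, completed by a stopping decision at $T_1$ (admissible for $\mu_1$ exactly because $w(T_1) = 1$) and extended arbitrarily using strictness of $F$ and $\mu_1$, is the restriction of some element of $\mathcal{B}_p(\mu_1 \times S)$ whose $\Pi_1$-cost equals $\sum_{t=0}^{T_1-1} g(x(t),x(t+1),u(t)) + G_1(x(T_1))$. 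Adding the two estimates gives $J(u,v,x) \le L_1(p,\mu_1)$; and if $x(T_1) \notin A_1$ then $G_1(x(T_1)) = \infty$, so that very $\mu_1$-trajectory already forces $L_1(p,\mu_1) = \infty$ and the bound is trivial. Taking the supremum over $\mathcal{B}_p(\mu \times S)$ concludes this direction.

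For $L(p,\mu) \ge L_1(p,\mu_1)$ I would reverse the construction. Given $(u_1,v_1,x_1) \in \mathcal{B}_p(\mu_1 \times S)$ of finite $\Pi_1$-cost, put $T_1 := \min v_1^{-1}(1)$; then $x_1(T_1) \in A_1$ and the cost equals $\sum_{t=0}^{T_1-1} g(x_1(t),x_1(t+1),u_1(t)) + L_2(x_1(T_1),\mu_2)$. Fix $\varepsilon > 0$, pick $(u',v',x') \in \mathcal{B}_{x_1(T_1)}(\mu_2 \times S)$ whose $\Pi_2$-cost exceeds $L_2(x_1(T_1),\mu_2) - \varepsilon$ (resp. is arbitrarily large if that quantity is $\infty$), and concatenate the prefix of $(u_1,v_1,x_1)$ on $\intcc{0;T_1}$ with $(u',v',x')$ time-shifted to start at $T_1$. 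By the definition of $\mu$ and the memorylessness of $\mu_2$ this produces a signal in $\mathcal{B}_p(\mu \times S)$ along which $A_1$ is visited at time $T_1 \le T$, so again $G(x|_{\intcc{0;T}}) = G_2(x(T))$, and its total cost equals $\sum_{t=0}^{T_1-1} g(x_1(t),x_1(t+1),u_1(t)) + (\text{the }\Pi_2\text{-cost of }(u',v',x')) \ge (\Pi_1\text{-cost of }(u_1,v_1,x_1)) - \varepsilon$. Letting $\varepsilon \downarrow 0$ and ranging over $(u_1,v_1,x_1)$ — the $\mu_1$-trajectories of infinite $\Pi_1$-cost being absorbed as in the degenerate cases above — yields $L(p,\mu) \ge L_1(p,\mu_1)$.

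The step I expect to be the main obstacle is the bookkeeping at the switching instant: making the stopping signal $w$ of the memoryless $\mu_1$ a well-defined object along every closed-loop signal, verifying that a $\mu$-trajectory's phase-one prefix really does extend to a $\mu_1$-closed-loop signal that stops exactly at $T_1$, and using shift-invariance of $\mathcal{B}_{\cdot}(\mu_2 \times S)$ to splice on an arbitrary $\mu_2$-tail without destroying admissibility — together with the verification of the crux identity $G(x|_{\intcc{0;T}}) = G_2(x(T))$ once $A_1$ has been visited, which is precisely what makes the two separate reach-avoid costs collapse into the single two-phase cost of $\Pi$.
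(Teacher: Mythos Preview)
Your proposal follows essentially the same approach as the paper: both directions are obtained by splitting the total cost at the switching instant $\tau$ (your $T_1$), identifying the tail with a $\Pi_2$-cost via $G(x|_{\intcc{0;T}})=G_2(x(T))$ once $A_1$ has been visited, and bounding via $L_2(x(\tau),\mu_2)$; the reverse inequality is handled in both by an $\varepsilon$-concatenation argument. Your write-up is somewhat more explicit about the splice construction and the degenerate cases ($w\equiv 0$, $x(T_1)\notin A_1$) than the paper's, but the underlying decomposition and estimates are the same.
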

By virtue of previous proposition 
we will be able to conclude the following 
central theorem of this work:
\begin{theorem}
\label{th:main}
Let $\Pi$, $\Pi_1$, $\Pi_2$ and $\mu$, $\mu_1$, $\mu_2$ 
be as in Proposition \ref{p:closedloopperformance}. 
Additionally, 
assume that $\mu_i$
is optimal for $\Pi_i$ for $i \in \{1,2\}$.
Then $\mu$ is optimal for $\Pi$. 
\end{theorem}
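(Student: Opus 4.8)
The plan is to show that the closed-loop performance $L(\cdot,\mu)$ equals the value function $V$ of $\Pi$. By Proposition~\ref{p:closedloopperformance} we have $L(\cdot,\mu)=L_1(\cdot,\mu_1)$, and since $\mu_1$ is optimal for $\Pi_1$ this upgrades to $L(\cdot,\mu)=V_1$, where $V_1$ is the value function of $\Pi_1$. As $\mu\in\mathcal{F}(X,U)$, the inequality $V\le L(\cdot,\mu)=V_1$ is immediate from the definition of $V$, so everything reduces to proving $V_1(p)\le L(p,\nu)$ for every $p\in X$ and every $\nu\in\mathcal{F}(X,U)$; taking the infimum over $\nu$ then gives $V_1\le V$, hence $V=V_1=L(\cdot,\mu)$ and $\mu$ is optimal.

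To establish $V_1(p)\le L(p,\nu)$ one may assume $L(p,\nu)<\infty$. By the shape of the trajectory cost~\eqref{e:tworeachavoid} and the convention $J=\infty$ when $v=0$, finiteness forces every $(u,v,x)\in\mathcal{B}_p(\nu\times S)$ to stop at a finite time $T=\min v^{-1}(1)$ with $x(T)\in A_2$ and with $x(s)\in A_1$ for some $s\le T$; hence $s^\ast:=\min\{t:x(t)\in A_1\}$ is finite, $s^\ast\le T$, and every running-cost term up to $T$ is finite. (This propagation of ``successful'' behaviour to every trajectory is worth isolating as a lemma.) Next I would define $\nu_1\in\mathcal{F}(X,U)$ to run $\nu$ verbatim while the state stays outside $A_1$ and to report ``stopped'' at the first instant the state lies in $A_1$. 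Then every behaviour of $\nu_1$ started at $p$ agrees, up to its stopping time, with a behaviour of $\nu$ started at $p$ up to the corresponding $s^\ast$, and its $\Pi_1$-cost equals $\sum_{t=0}^{s^\ast-1}g(x(t),x(t+1),u(t))+L_2(x(s^\ast),\mu_2)$, because the trajectory cost of $\Pi_1$ at the point $x(s^\ast)\in A_1$ is $L_2(x(s^\ast),\mu_2)$.

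The crux is the bound $\sum_{t=0}^{s^\ast-1}g(x(t),x(t+1),u(t))+L_2(x(s^\ast),\mu_2)\le L(p,\nu)$ for each behaviour $(u,v,x)$ of $\nu$ from $p$. Fix such a behaviour, put $q:=x(s^\ast)$, and let $\nu_2\in\mathcal{F}(X,U)$ be the controller for $\Pi_2$ obtained from $\nu$ by prepending the fixed prefix $x|_{\intcc{0;s^\ast}}$, $u|_{\intco{0;s^\ast}}$ to every observed history (and defined arbitrarily on histories not starting at $q$, which keeps it strict). One verifies that every element of $\mathcal{B}_q(\nu_2\times S)$ is, after the shift $\sigma^{s^\ast}$, the tail of a behaviour of $\nu$ from $p$ extending the fixed prefix, that it still stops in $A_2$, and that, since the trajectory cost of $\Pi_2$ on $A_2$ is $G_0$, its $\Pi_2$-cost equals that behaviour's total cost minus $\sum_{t=0}^{s^\ast-1}g(x(t),x(t+1),u(t))$. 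Passing to the supremum over these continuations (only the disturbance after time $s^\ast$ varies) yields $L_2(q,\nu_2)\le L(p,\nu)-\sum_{t=0}^{s^\ast-1}g(x(t),x(t+1),u(t))$; optimality of $\mu_2$ replaces the left-hand side by $V_2(q)=L_2(q,\mu_2)$, which is the desired bound. Consequently $L_1(p,\nu_1)\le L(p,\nu)$, so $V_1(p)\le L_1(p,\nu_1)\le L(p,\nu)$.

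The main obstacle is the worst-case (supremum) semantics of the closed-loop performance: a single closed-loop trajectory may be cheaper than the value function at its initial state, so one cannot simply cut a trajectory at the first visit of $A_1$ and substitute $V_2$ pointwise. The ``tail controller'' $\nu_2$ is precisely the device that transports the whole bundle of continuations, so that the supremum defining $L_2(q,\nu_2)$ remains dominated by the supremum defining $L(p,\nu)$. The technical labour thus lies in checking that these tails are genuine elements of $\mathcal{B}_q(\nu_2\times S)$, that stopping times and running-cost sums decompose additively at the cut time $s^\ast$, and that $\nu_1$ and $\nu_2$ can be chosen strict; the remaining steps are routine bookkeeping.
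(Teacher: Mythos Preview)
Your argument is correct and shares the paper's skeleton: reduce via Proposition~\ref{p:closedloopperformance} to $V_1(p)\le L(p,\nu)$, modify the competing controller so that it stops at the first visit of $A_1$, invoke optimality of $\mu_2$ at the hit point $q$, and finish with optimality of $\mu_1$. The technical realisation, however, differs. The paper fixes an optimal $\mu^\ast$ for $\Pi$, picks near-supremum trajectories for $\bar\mu^\ast$ and for $\mu^\ast$ after time $\tau$, and closes the gap by an $\varepsilon$-triangle-inequality argument; in particular it uses the identity $L_2(q,\mu_2)=L(q,\mu^\ast)$ for $q\in A_1$. You instead compare against an arbitrary $\nu$, build the explicit tail controller $\nu_2$ by prefix-prepending, and bound $L_2(q,\mu_2)\le L_2(q,\nu_2)\le L(p,\nu)-\sum_{t<s^\ast}g(\ldots)$ directly, so no $\varepsilon$-limits and no appeal to the existence of an optimal $\mu^\ast$ are needed. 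Both routes are short; yours is slightly more self-contained, while the paper's keeps the auxiliary-controller bookkeeping to a minimum by working with single near-optimal trajectories.
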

\begin{proof}[Proof of Proposition \ref{p:closedloopperformance}]
Let $p \in X$. Denote by $J$, $J_1$, $J_2$ the total cost \eqref{e:costfunctional} 
as defined for $\Pi$, $\Pi_1$, $\Pi_2$, respectively.\\
Firstly, we prove that for all signals
$(u,v,x) \in \mathcal{B}_p(\mu \times S)$, where $S=(X,U,F)$, it holds
\begin{equation}
\label{e:closedloopperformance:step:1}
J(u,v,x) \leq L_1(p,\mu_1)
\end{equation}
if $L_1(p,\mu_1)$ is finite. 
Under the latter assumption, $\tau := \inf w^{-1}(1)$ and 
$T := \inf v^{-1}(1)$ are finite, where $w := (\mu_1 \circ x)_2$.
Then, using \eqref{e:costfunctional:definition}  
as defined for $\Pi$ and $\Pi_2$  
yields%
\ifx\arxivVersion\undefined
\else
\begin{align*}
J(u,v,x) &= {\sum_{t=0}^{\tau-1} g(x(t),x(t+1),u(t))} 
+ \sum_{t=\tau}^{T-1} g(\ldots)
+ G_0(x(T)) \\
&=
\sum_{t=0}^{\tau-1} g(x(t),x(t+1),u(t)) 
+ J_2(\sigma^{\tau}u,\sigma^{\tau}v,\sigma^{\tau}x).
\end{align*}
\fi
(Above, the ellipsis stands for the evident arguments to $g$.)
Using \eqref{e:closedloopperformance} as defined for $\Pi_2$ 
we see that the last term in the last sum is not greater than 
$L_2(x(\tau),\mu_2)$, so 
we conclude \eqref{e:closedloopperformance:step:1} 
using \eqref{e:costfunctional:definition} and \eqref{e:closedloopperformance} 
as defined for $\Pi_1$.\\
Next, assume $L(p,\mu) = \infty$. 
Then, for every $M \geq 0$
there exists a signal $(u,v,x) \in \mathcal{B}_p(\mu \times S)$
such that $J(u,v,x) \geq M$.
Assume that, however, $L_1(p,\mu_1)$ is finite. 
Then, by \eqref{e:closedloopperformance:step:1}, $M \leq L_1(p,\mu_1)$, a contradiction. 
So, $L(p,\mu) = L_1(p,\mu)$.\\
Now, assume 
$L(p,\mu) < \infty$. 
Obviously, $L_1(p,\mu_1) < \infty$. 
Then, for $\varepsilon > 0$ there exists $(u,v,x) \in \mathcal{B}_p(\mu \times S)$ such that 
$L(p,\mu) < J(u,v,x) + \varepsilon < L_1(p,\mu_1) + \varepsilon$ by \eqref{e:closedloopperformance:step:1}. 
The limit $\varepsilon \to 0$ implies $L(p,\mu) \leq L_1(p,\mu_1)$. \\
For the reverse inequality, let $\varepsilon > 0$ and 
note that
there exist $v \in \{0,1\}^{\Z_+}$ and $(u,x) \in \mathcal{B}_p(S)$ 
such that the inequalities
\begin{align*}
&L_1(p,\mu_1) - J_1(u,v,x) < \varepsilon, \\
&L_2(x(\tau),\mu_2) - J_2(\sigma^{\tau}u,\sigma^{\tau}v, \sigma^{\tau}x ) < \varepsilon 
\end{align*}
hold, where $\tau := \min v^{-1}(1)$.
It follows that 
\begin{align*}
& L_1(p,\mu_1) - J(u,v,x) < \varepsilon + J_1(u,v,x) - J(u,v,x) = \\
&=\varepsilon + L_2(x(\tau),\mu_2) - J_2(\sigma^{\tau}u,\sigma^{\tau}v,\sigma^{\tau}x ) < 2 \varepsilon.
\end{align*}
We deduce 
\begin{align*}
0 &\leq L(p,\mu) - J(u,v,x) \\
& = L_1(p,\mu_1) - J(u,v,x) + L(p,\mu) - L_1(p,\mu_1) \\
&\leq 2\varepsilon + L(p,\mu) - L_1(p,\mu_1). 
\end{align*}
Using the limit $\varepsilon \to 0$ we conclude $L(p,\mu) \geq L_1(p,\mu_1)$.
\end{proof}
\begin{proof}[Proof of Theorem \ref{th:main}]
Let $p \in X$ and $\mu^*$ be an optimal controller for $\Pi$. 
By Proposition \ref{p:closedloopperformance}
it is enough to verify 
\begin{equation}
\label{e:proof:main:1}
L(p,\mu^\ast) \geq L_1(p,\mu_1).
\end{equation}
To this end, 
let $\bar \mu^\ast \in \mathcal{F}(X,U)$ be such that 
its image coincides with $\mu^\ast$ in the first component 
but $\bar \mu^\ast( a|_{\intcc{0;t}}, b|_{\intco{0;t}} )_2 = \{1\}$ 
if $a(t) \in A_1$ for any $a$, $b$ and $t$. 
Let $\varepsilon > 0$ and $(u,\bar v, x) \in \mathcal{B}_p(\bar \mu^\ast \times S)$, where $S= (X,U,F)$, satisfy
\begin{equation}
\label{e:proof:main:2}
|L_1(p,\bar \mu^\ast) - J_1(u,\bar v, x) | < \varepsilon.
\end{equation}
Let $\tau = \min \bar v^{-1}(1)$ and $q = x(\tau)$.
Then $L_2(q, \mu_2) = L(q,\mu^\ast)$ 
by the optimality of $\mu_2$ and 
as $q \in A_1$. 
So, without loss of generality 
$|L_2(q,\mu_2) - J(\sigma^{\tau}u, \sigma^{\tau}v,\sigma^{\tau}x) | < \varepsilon$, where $v \in \{0,1\}^{\Z_+}$ satisfies
$v(\tau +t) \in \mu^\ast((\sigma^{\tau}x)|_{\intcc{0;t}},(\sigma^{\tau}u)|_{\intco{0;t}})_2$ for $t \in \Z_+$ and $v(\intco{0;\tau}) = \{0\}$.
It follows
\begin{equation}
\label{e:proof:main:3}
|J_1(u,\bar v, x) - J(u,v,x)| < \varepsilon.
\end{equation}
By the triangle inequality applied to
\eqref{e:proof:main:2} and \eqref{e:proof:main:3} 
it follows
\begin{equation}
\label{e:proof:main:4}
| L_1(p,\bar \mu^\ast) - J(u,v,x) | < 2 \varepsilon.
\end{equation}
Since $L_1(p, \mu_1) \leq L_1(p ,\bar \mu^\ast)$ by the optimality of $\mu_1$
the combination with \eqref{e:proof:main:4} and 
the definition of $v$ yields 
\begin{equation*}
L_1(p,\mu_1) \leq J(u,v,x) + 2 \varepsilon \leq L(p,\mu^\ast) + 2\varepsilon.
\end{equation*} 
Using the limit $\varepsilon \to 0$ we conclude \eqref{e:proof:main:1}.
\end{proof}
\subsection{Fully-automated approximate solution}
\label{ss:main:approximatesolution}
The findings presented previously do not directly
imply a constructive method for obtaining an optimal controller \eqref{e:closedloopperformance:controller}
since optimal controllers $\mu_1$ and $\mu_2$ need to be known.
Below, a constructive method to approximately solve
quantitative two-phase reach-avoid problems 
on sampled versions of plants governed 
by the differential inclusion \eqref{e:system:cont} 
is presented.

To begin with, from Proposition \ref{p:closedloopperformance} 
and Theorem \ref{th:main}
the synthesis procedure 
given in Fig.~\ref{alg:twophase} can be deduced, which on success
results in an optimal controller for the input problem. 
\begin{figure}[h]
\hrule{}
\vspace{1ex}
\begin{algorithmic}[1]
\Input{$\Pi = (X,U,F,G,g)$ in Prop.~\ref{p:closedloopperformance}.}
\State{\label{alg:twophase:firstproblem}$(V_2,\mu_2) \gets$ \text{Solution of} $\Pi_2$ in Prop.~\ref{p:closedloopperformance}}
\If{$V_2(A_1) = \{\infty\}$}
\State{\Return{$\emptyset$}\hfill{}/\!/~No non-trivial solution}
\EndIf{}
\State{\label{alg:twophase:secondproblem}$(V_1,\mu_1) \gets$ \text{Solution of} $\Pi_1$ in Prop.~\ref{p:closedloopperformance}}
\State{\Return{$(\mu_1,\mu_2)$}\hfill{}/\!/~Success}
\end{algorithmic}
\vspace{1ex}
\hrule{}
\vspace{1ex}
\caption{\label{alg:twophase}Procedure to synthesize $\mu$ in \eqref{e:closedloopperformance:controller} such that 
$\mu$ is optimal for the quantitative two-phase reach-avoid problem $\Pi$.}
\end{figure}%
\ifx\arxivVersion\undefined
\vspace{-2ex}%
\else
\fi
In the case that $X$ and $U$ of the input data 
in Fig.~\ref{alg:twophase} are finite sets, 
concrete algorithms 
to perform lines \ref{alg:twophase:firstproblem} and 
\ref{alg:twophase:secondproblem} have been formulated 
in \cite{ReissigRungger18,MacoveiciucReissig19} 
(based on Dijkstra's algorithm) 
and 
\cite{WeberKreuzerKnoll20} 
(based on the Bellman-Ford algorithm).
These concrete algorithms have been developed within 
the already mentioned framework of {Symbolic Optimal Control} (SOC) \cite{ReissigRungger18} and whose motivations were 
to efficiently solve optimal control problems with terminal cost (rather than trajectory cost) on sampled-data plants.

In what follows, we show how to utilize SOC to deduce
the announced constructive method. We begin with a brief review
on the synthesis method of SOC. 
The key idea is to lift the original 
optimal control problem $\Pi = (X,U,F,G,g)$ to 
an \emph{abstract} optimal control
problem $\Pi'$ with 
\emph{finite} state space $X'$ and 
\emph{finite} input set $U'$. 
In fact, $X'$ is chosen as a finite cover of $X$ and $U'$ as a subset of $U$. 
The fundamental results of 
\begriff{symbolic} (or \begriff{abstraction-based}) controller synthesis 
then allow to conclude an approximate
solution for the original problem $\Pi$ (in case of a terminal cost \eqref{e:trajectorycost}) by \begriff{controller refinement}.
Specifically, the refined controller 
is the interconnection of the ``abstract" controller $\mu'$ synthesized for $\Pi'$ and a quantizer \mbox{(``A/D converter")}
\begin{equation}
\label{e:quantizer}
Q \colon X \rightrightarrows X', \quad \Omega \in Q(p) :\Leftrightarrow p \in \Omega.
\end{equation}
See Fig.~\ref{fig:symboliccontrol}. The precise statement is 
as follows \cite[Th.~V.6, Prop.~VI.2]{ReissigRungger18}. 
\ifx\arxivVersion\undefined
\else
\begin{figure}
\centering
\usetikzlibrary{arrows}
\begin{tikzpicture}[scale=.75, every node/.style={scale=.8}]
\draw[thick]  (-3.75,3.25) rectangle (-1.5,2.5) node[pos=.5] {$(X,U,F)$};
\draw[thick]   (0.375,3.25) node (v2) {} rectangle (2.625,2.5) node[pos=.5] {$(X',U',F')$};
\draw[-latex] (-1.5,2.875) -- (-1.25,2.875) -- (-1.25,1.125) -- (-1.5,1.125);
\draw  (-2.25,1.375) rectangle (-1.625,0.875) node [pos=.5] (v4) {$Q$};
\draw  (-3.625,1.375) rectangle (-2.75,0.875) node[pos=.5] {$\mu'$};
\draw[thick,densely dotted] (-2.625,2) circle (0.3) node {$\Pi$};
\draw[thick,densely dotted] (1.5,2) circle (0.3) node {$\Pi'$};
\draw[-latex] (-2.25,1.125) -- (-2.75,1.125);
\draw[-latex] (-3.75,1.125) -- (-4,1.125) -- (-4,2.875) -- (-3.75,2.875);
\node (v1) at (-2.625,3.25) {};
\node (v4) at (1.5,3.25) {};
\draw[gray!60!white,line width=2][->]  (v1) edge [ bend angle=12,bend left] (v4);
\node at (-0.625,3.75) {Abstraction};
\draw  (0.375,1.5) rectangle (2.625,0.75) node[pos=.5] {$\mu'$};
\draw[-latex] (2.625,2.875) -- (2.875,2.875) -- (2.875,1.125) -- (2.625,1.125);
\draw[-latex] (0.375,1.125) -- (0.125,1.125) -- (0.125,2.875) -- (0.375,2.875);
\node (v3) at (0.375,0.75) {};
\node (v5) at (1.5,0.75) {};
\node (v6) at (-2.625,0.75) {};
\draw[gray!60!white,line width=2,densely dashed][->]  (v5) edge [ bend angle=12,bend left] (v6);
\node at (-0.625,0.25) {Refinement};
\node (v7) at (-1.5,0.75) {};
\draw[densely dashed]  (-3.75,1.5) rectangle (v7);
\end{tikzpicture}
\caption{\label{fig:symboliccontrol}Principle of symbolic (or abstraction-based) controller synthesis.}
\end{figure}
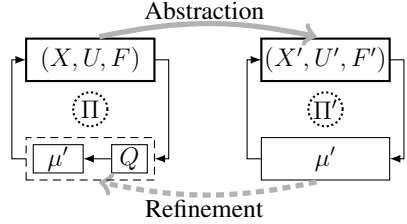 
\fi
\begin{theorem}[\!\!\cite{ReissigRungger18}]
\label{th:soc}
Let 
\begin{equation*}
\Pi = (X, U, F, G, g) \ \text{ and } \ \Pi' = (X', U', F', G', g')
\end{equation*}
be optimal control problems 
such that $G \colon X \to \mathbb{R}_+ \cup \{ \infty \}$, 
$G' \colon X' \to \mathbb{R}_+ \cup \{ \infty \}$
and the following holds:
\begin{enumerate}[(i)]
\item\label{th:soc:i} $X'$ is a cover of $X$ by non-empty sets;
\item\label{th:soc:ii} $U' \subseteq U$,
\end{enumerate}
and whenever $x \in \Omega \in X'$, $x' \in \Omega' \in X'$, $u \in U'$:
\begin{enumerate}[(i)]
\setcounter{enumi}{2}
\item\label{th:soc:iii} $\Omega ' \cap F(\Omega, u) \neq \emptyset \Rightarrow \Omega' \in F'(\Omega,u)$;
\item\label{th:soc:iv} $G(x) \leq G'(\Omega)$;
\item\label{th:soc:v} $g(x,x',u) \leq g'(\Omega,\Omega',u)$.
\end{enumerate}
Let $\mu ' \in \mathcal{F}_0(X',U')$ and let $Q$ be as in \eqref{e:quantizer}.
Then 
\[L(p,\mu' \circ Q) \leq \sup\{L'(\Omega,\mu' ) \mid p \in \Omega\}\] 
for all $p \in X$, 
where $L$ and $L'$ denote 
the closed-loop performance of $\Pi$ and $\Pi'$ respectively.
\end{theorem}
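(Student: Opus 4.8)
The plan is to compare the refined closed loop with the abstract one trajectory by trajectory. Fix $p \in X$ and an arbitrary $(u,v,x) \in \mathcal{B}_p(\mu' \circ Q \times S)$ with $S = (X,U,F)$; since $L(p,\mu'\circ Q)$ is the supremum of the total cost $J(u,v,x)$ of $\Pi$ taken over all such signals, it suffices to show $J(u,v,x) \le \sup\{L'(\Omega,\mu')\mid p\in\Omega\}$. The idea is to manufacture from $(u,v,x)$ a trajectory $(u,v,\Omega)$ of the abstract closed loop $\mathcal{B}_{\Omega_0}(\mu'\times S')$, $S'=(X',U',F')$, that shares the same input and stopping signals, and then to bound $J(u,v,x)$ by the abstract total cost $J'(u,v,\Omega)$.

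First I would build the abstract state signal. Since $\mu' \circ Q$ is memoryless, $(u(t),v(t)) \in (\mu'\circ Q)(x(t))$ for every $t\in\Z_+$, so by~\eqref{e:quantizer} there is some $\Omega_t \in X'$ with $x(t)\in\Omega_t$ and $(u(t),v(t))\in\mu'(\Omega_t)$; I pick one such $\Omega_t$ for each $t$, with no coherence required across times. Then $u(t)\in U'$ because $\mu'\in\mathcal{F}_0(X',U')$, and $p\in\Omega_0$. The step that invokes the hypotheses is the verification that $(u,\Omega)\in\mathcal{B}_{\Omega_0}(S')$: from $x(t)\in\Omega_t$ one gets $x(t+1)\in F(x(t),u(t))\subseteq F(\Omega_t,u(t))$ while $x(t+1)\in\Omega_{t+1}$, hence $\Omega_{t+1}\cap F(\Omega_t,u(t))\neq\emptyset$, and assumption~\ref{th:soc:iii} gives $\Omega_{t+1}\in F'(\Omega_t,u(t))$. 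Together with $(u(t),v(t))\in\mu'(\Omega_t)$ this shows $(u,v,\Omega)\in\mathcal{B}_{\Omega_0}(\mu'\times S')$, whence $J'(u,v,\Omega)\le L'(\Omega_0,\mu')\le\sup\{L'(\Omega,\mu')\mid p\in\Omega\}$.

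It then remains to show $J(u,v,x)\le J'(u,v,\Omega)$. The same $v$ drives both closed loops, so both total costs are governed by the same $T=\min v^{-1}(1)$, and both equal $\infty$ when $v=0$ (so the bound is trivial in that case). For finite $T$, assumption~\ref{th:soc:v} gives $g(x(t),x(t+1),u(t))\le g'(\Omega_t,\Omega_{t+1},u(t))$ for $t\in\intco{0;T}$ — valid since $x(t)\in\Omega_t$, $x(t+1)\in\Omega_{t+1}$ and $u(t)\in U'$ — while assumption~\ref{th:soc:iv} gives $G(x(T))\le G'(\Omega_T)$; summing via~\eqref{e:costfunctional:definition} yields $J(u,v,x)\le J'(u,v,\Omega)$. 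Chaining this with the previous inequality gives $J(u,v,x)\le\sup\{L'(\Omega,\mu')\mid p\in\Omega\}$, and taking the supremum over all $(u,v,x)\in\mathcal{B}_p(\mu'\circ Q\times S)$ completes the argument.

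The hard part will be nothing conceptual — it is the bookkeeping in the first step: one must check that the pointwise choices $\Omega_t$ assemble into a genuine run of the \emph{abstract closed loop} (not merely of the abstract transition system) carrying exactly the concrete input and stopping signals. This is where the soundness relation~\ref{th:soc:iii} is used, under the standing convention $F(\Omega,u)=\bigcup_{x\in\Omega}F(x,u)$ for $\Omega\subseteq X$; everything after that is a termwise monotonicity estimate from~\ref{th:soc:iv} and~\ref{th:soc:v}, the latter applicable precisely because $\mu'$ being memoryless forces each $u(t)$ into $U'$.
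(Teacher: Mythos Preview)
The paper does not include a proof of Theorem~\ref{th:soc}; the result is quoted from \cite{ReissigRungger18} (specifically Th.~V.6 and Prop.~VI.2 there) and used as a black box in the proof of Theorem~\ref{th:refinement}. So there is no in-paper argument to compare against.

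That said, your sketch is correct and is precisely the standard argument behind the cited result. The only point worth flagging is terminological: your use of $F(\Omega,u)=\bigcup_{x\in\Omega}F(x,u)$ is exactly what is meant in hypothesis~\ref{th:soc:iii}, and your observation that $\mu'\circ Q$ being memoryless lets you pick $\Omega_t$ pointwise (with $(u(t),v(t))\in\mu'(\Omega_t)$ and $x(t)\in\Omega_t$) is the crux that produces an abstract run sharing the same $(u,v)$. After that, conditions~\ref{th:soc:iv} and~\ref{th:soc:v} give the termwise bound $J(u,v,x)\le J'(u,v,\Omega)$, and the supremum over $\Omega\ni p$ absorbs the arbitrary choice of $\Omega_0$. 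Nothing is missing.
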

Furthermore, the results of \cite{ReissigRungger18} 
provide a statement on the distance of $L$ to $L'$ 
as well as a convergence statement. 
A computational technique to efficiently 
implement \eqref{th:soc:iii} 
under certain assumptions on 
the involved transition system $(X,U,F)$ is given in \cite{ReissigWeberRungger17,Weber18}.

Having previous results at hand, 
the synthesis technique to solve 
two-phase reach-avoid problems 
on sampled systems is 
as given in Fig.~\ref{alg:approximate}. 
Loosely speaking, the idea is 
to apply Fig.~\ref{alg:twophase} to the abstract problem $\Pi'$ and 
to follow the method depicted in Fig.~\ref{fig:symboliccontrol}.
This results in an approximately optimal controller, whose structure is depicted in
Fig.~\ref{fig:controller} and whose formal properties are provided by next theorem.
\begin{figure}[h]
\hrule{}
\vspace{1ex}
\begin{algorithmic}[1]
\Input{Two-phase reach-avoid problem $(X,U,F,G,g)$ associated with $A_1$, $A_2$ and $G_0$}
\State{\label{alg:approximate:abstraction}Define $X'$, $U'$, $F'$ satisfying \eqref{th:soc:i}--\eqref{th:soc:iii} of Theorem \ref{th:soc}.}
\State{Define $G'$ by \eqref{e:tworeachavoid} with 
\begin{align*}
&A'_i = \{\Omega \in X' \mid \Omega \subseteq A_i \}, \ i \in \{1,2\}, \\
&G_0'(\Omega) \geq \sup_{x \in \Omega}\nolimits G_0(x).
\end{align*}
in place of $A_1$, $A_2$ and $G_0$, respectively.}
\State{Define $g'$ satisfying \eqref{th:soc:v} in Theorem \ref{th:soc}.}
\State{\label{alg:approximate:apply}Apply the procedure in Fig.~\ref{alg:twophase} to $(X',U',F',G',g')$}
\If{line \ref{alg:approximate:apply} returns a non-trivial solution}
\State{Define $Q$ by \eqref{e:quantizer}}
\State{Define $\mu_Q \in \mathcal{F}(X,U)$ by \eqref{e:closedloopperformance:controller} with 
$\mu_1 ' \circ Q$ and $\mu_2' \circ Q$ 

in place of $\mu_1$ and $\mu_2$, respectively,
where $(\mu_1',\mu_2')$ 

is the return value of line \ref{alg:approximate:apply}.}
\State{\Return{$\mu_Q$}\hfill{}/\!/~Success}
\Else{}
\State{\Return{$\emptyset$}\hfill{}/\!/~Synthesis failed}
\EndIf{}
\end{algorithmic}
\vspace{1ex}
\hrule{}
\vspace{1ex}
\caption{\label{alg:approximate}Procedure to synthesize an approximately optimal controller for the quantitative two-phase reach-avoid problem $(X,U,F,G,g)$. 
}
\end{figure}
\ifx\arxivVersion\undefined
\else
\begin{figure}
\centering
\usetikzlibrary{arrows}
\ifx\arxivVersion\undefined
\begin{tikzpicture}[scale=.75, every node/.style={scale=.8}]
\else 
\begin{tikzpicture}[scale=1., every node/.style={scale=.9}]
\fi
\draw[thick]  (-5,2.5) rectangle (-0.5,1.5);
\draw[thick]  (-5,1) rectangle (-0.5,0);
\draw[thick]  (0.625,1.75) rectangle (2.75,0.75);
\coordinate (v1) at (-0.5,2) {};
\coordinate (v2) at (0,2) {};
\coordinate (v4) at (0,0.5) {};
\coordinate (v5) at (-0.5,0.5) {};
\coordinate (v3) at (0,1.25) {} ;
\coordinate (v6) at (0.625,1.25) {};
\draw[-latex]  (v2) -- (v1);
\draw[-latex] (v2) -- (v3) -- (v4) -- (v5);
\draw[]  (v3) -- (v6);
\coordinate (v11) at (-5,1.75) {} {} {};
\coordinate (v12) at (-5.25,1.75) {} {};
\coordinate (v13) at (-5,0.75) {} {} {} {} {};
\coordinate (v14) at (-5.25,0.75) {} {};
\draw (v11) -- (v12);
\draw[] (v13) -- (v14);
\coordinate (v19) at (-6.5,1.25) {} {};
\coordinate (v18) at (-7,2.75) {} {} {} {};
\coordinate (v15) at (2.75,1.25) {};
\coordinate (v16) at (3.5,1.25) {} ;
\coordinate (v17) at (3.5,2.75) {};
\draw[-latex,thick] (v17) -- (v16) -- (v15);
\node at (-2.75,2) {\large{}$\mu_1' \colon X' \rightrightarrows U' \times \{0,1\}$};
\node at (-2.75,0.5) {\large{}$\mu_2' \colon X' \rightrightarrows U'\times \{0,1\}$};
\node at (1.75,1.25) {$Q\colon X \rightrightarrows X'$};
\node[align=center] at (1.75,0.25) {\small{}Quantizer\\\small{}(A/D converter)};
\node at (0.25,1.5) {$\Omega$};
\node at (3.75,2) {$x$};
\node at (-7.25,2) {$u$};
\draw [dashed,thick](v19) -- (v12);
\draw [dashed,thick](v19) -- (v14);
\draw [black,fill=black] (v14) ellipse (0.05 and 0.05);
\draw [black,fill=black] (v12) ellipse (0.05 and 0.05);
\draw [black,fill=black] (v19) ellipse (0.025 and 0.025);
\coordinate (v20) at (-5,2.25) {} {} {};
\coordinate (v21) at (-5.75,2.25) {} {} {};
\coordinate (v22) at (-5.75,1.625) {} {};
\draw [-open triangle 45](v20) -- (v21) -- (v22);
\coordinate (v23) at (-5,0.25) {} {} {};
\coordinate (v24) at (-5.75,0.25) {} {} {};
\coordinate (v25) at (-5.75,0.875) {} {};
\draw [-open triangle 45] (v23) -- (v24) -- (v25);
\node[align=right,text width=0cm] (v26) at (-5.25,1.25) {\footnotesize\textit{~off}};
\draw [black,fill=black] (v26) ellipse (0.025 and 0.025);
\draw [dashed] (v19) -- (v26);
\coordinate (v27) at (-7,1.25) {};
\draw [-latex,thick](v27) -- (v18);
\draw [thick](v19) -- (v27);
\node at (-2.75,2.75) {\small{}Abstract controller for Phase 1};
\node at (-2.75,-0.25) {\small{}Abstract controller for Phase 2};
\node at (-5.25,2) {\small{}1};
\node at (-5.25,0.5) {\small{}2};
\end{tikzpicture}
\caption{\label{fig:controller}Approximately optimal controller for quantitative two-phase reach-avoid problems on sampled-data systems. The switch moves in the order ``1-2-\textit{off}" according \eqref{e:closedloopperformance:controller}, where ``\textit{off}" occurs when $\mu_2'$ signals stopping.}
\end{figure}
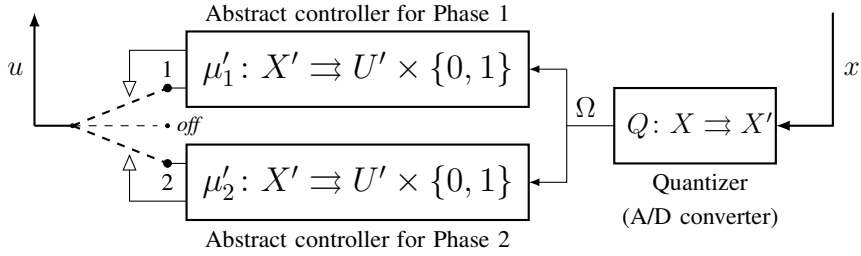
\fi
\begin{theorem}
\label{th:refinement}
Consider the algorithm in Fig.~\ref{alg:approximate} with input $\Pi = (X,U,F,G,g)$ and 
assume that it returns 
the controller $\mu_Q$. 
Then $\mu_Q$ 
approximately solves $\Pi$ in the sense that
\begin{equation}
\label{e:th:refinement}
L(p,\mu_Q) \leq \sup \{ V'(\Omega) \mid p \in \Omega \}
\end{equation} 
for all $p \in X$, 
where $L$ denotes the closed-loop performance of $\Pi$ and $V'$ stands for the value function 
of the abstract optimal control problem $\Pi'=(X',U',F',G',g')$ in line \ref{alg:approximate:apply}.
\end{theorem}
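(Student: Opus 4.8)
The plan is to recognise $\mu_Q$ as the controller refinement, in the sense of Theorem~\ref{th:soc}, of an optimal controller for the abstract two-phase problem $\Pi'$, and then to chain the guarantees of Proposition~\ref{p:closedloopperformance}, Theorem~\ref{th:main} and Theorem~\ref{th:soc}. First I would settle the \emph{abstract} side. Since line~\ref{alg:approximate:apply} runs the procedure of Fig.~\ref{alg:twophase} on $\Pi' = (X',U',F',G',g')$ and, by hypothesis, returns a non-trivial pair $(\mu_1',\mu_2')$, the controller $\mu_2'$ is optimal for the abstract reach-avoid problem $\Pi_2' = (X',U',F',G_2',g')$ associated with $A_2'$ and $G_0'$, and $\mu_1'$ is optimal for the abstract reach-avoid problem $\Pi_1' = (X',U',F',G_1',g')$ associated with $A_1'$ and $L_2'(\cdot,\mu_2') = V_2'$, where $L_2'$, $L_1'$ denote the closed-loop performances of $\Pi_2'$, $\Pi_1'$ and $V_2'$ the value function of $\Pi_2'$. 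Applying Proposition~\ref{p:closedloopperformance} and Theorem~\ref{th:main} to $\Pi'$ with these data, the controller $\mu'$ assembled from $\mu_1'$ and $\mu_2'$ via \eqref{e:closedloopperformance:controller} is optimal for $\Pi'$, whence $V' = L'(\cdot,\mu') = L_1'(\cdot,\mu_1') = V_1'$, with $V_1'$ the value function of $\Pi_1'$.

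Next I would descend to $\Pi$. By construction the returned controller $\mu_Q$ is exactly the controller \eqref{e:closedloopperformance:controller} built from the \emph{memoryless} controllers $\mu_1'\circ Q$ and $\mu_2'\circ Q$ (memorylessness is inherited because $Q$ is a strict set-valued map and $\mu_i'\in\mathcal{F}_0(X',U')$). Hence Proposition~\ref{p:closedloopperformance}, applied to $\Pi$ with $\mu_2 := \mu_2'\circ Q$, with $\Pi_2$ the reach-avoid problem associated with $A_2$ and $G_0$, with $\Pi_1$ the reach-avoid problem associated with $A_1$ and $L_2(\cdot,\mu_2'\circ Q)$, and with $\mu_1 := \mu_1'\circ Q$, yields $L(p,\mu_Q) = L_1(p,\mu_1'\circ Q)$ for all $p\in X$. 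It then remains to show $L_1(p,\mu_1'\circ Q) \le \sup\{V_1'(\Omega)\mid p\in\Omega\}$.

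For that inequality I would apply Theorem~\ref{th:soc} to the two reach-avoid pairs $(\Pi_2,\Pi_2')$ and $(\Pi_1,\Pi_1')$. For both pairs, hypotheses \eqref{th:soc:i}--\eqref{th:soc:iii} hold by line~\ref{alg:approximate:abstraction} and \eqref{th:soc:v} by line~3 of Fig.~\ref{alg:approximate}. For $(\Pi_2,\Pi_2')$, hypothesis \eqref{th:soc:iv} holds because $x\in\Omega$ together with $\Omega\subseteq A_2$ forces $x\in A_2$ and then $G_0(x)\le\sup_{y\in\Omega}G_0(y)\le G_0'(\Omega)$; Theorem~\ref{th:soc} then gives $L_2(x,\mu_2'\circ Q)\le\sup\{V_2'(\Omega')\mid x\in\Omega'\}$ for all $x$. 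For $(\Pi_1,\Pi_1')$, hypothesis \eqref{th:soc:iv}, namely $G_1(x)\le G_1'(\Omega)$ whenever $x\in\Omega$, is vacuous unless $\Omega\in A_1'$; in that case $\Omega\subseteq A_1$, $x\in A_1$, $G_1'(\Omega) = V_2'(\Omega)$, and what is needed is the per-cell bound $L_2(x,\mu_2'\circ Q)\le V_2'(\Omega)$. Granting this, Theorem~\ref{th:soc} gives $L_1(p,\mu_1'\circ Q)\le\sup\{L_1'(\Omega,\mu_1')\mid p\in\Omega\} = \sup\{V_1'(\Omega)\mid p\in\Omega\}$, which combined with $V_1' = V'$ and $L(p,\mu_Q) = L_1(p,\mu_1'\circ Q)$ is the assertion.

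The hard part is precisely the per-cell bound $L_2(x,\mu_2'\circ Q)\le V_2'(\Omega)$ for $x\in\Omega\in A_1'$ required for hypothesis \eqref{th:soc:iv} of $(\Pi_1,\Pi_1')$: the refinement guarantee of Theorem~\ref{th:soc} applied to the phase-2 pair only yields the weaker $L_2(x,\mu_2'\circ Q)\le\sup\{V_2'(\Omega')\mid x\in\Omega'\}$, so tightening it to the single chosen cell $\Omega$ needs control of how $\mu_2'$ and the quantizer act on overlapping cells of the cover $X'$ — immediate when $Q(p)$ is a singleton (e.g.\ a quantizer-induced partition), and the place where careful set-valued bookkeeping at the phase switch is required otherwise. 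The remaining manipulations are routine accounting with \eqref{e:costfunctional:definition} in the style of the proof of Proposition~\ref{p:closedloopperformance}.
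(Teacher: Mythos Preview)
Your plan matches the paper's in its skeleton: both reduce $L(p,\mu_Q)$ to $L_1(p,\mu_1'\circ Q)$ via Proposition~\ref{p:closedloopperformance}, identify $V'$ with $L_1'(\cdot,\mu_1')$ via Theorem~\ref{th:main} on $\Pi'$, and then bound $L_1$ against $L_1'$. The genuine difference lies in how that last bound is obtained. You attempt to apply Theorem~\ref{th:soc} as a black box to the pair $(\Pi_1,\Pi_1')$ and are then forced to verify hypothesis~\eqref{th:soc:iv}, which --- as you rightly note --- asks for the per-cell inequality $L_2(x,\mu_2'\circ Q)\le L_2'(\Omega,\mu_2')$, whereas Theorem~\ref{th:soc} applied to $(\Pi_2,\Pi_2')$ only supplies a supremum over cells. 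The paper does \emph{not} try to certify \eqref{th:soc:iv} for $(\Pi_1,\Pi_1')$; instead it unrolls the refinement argument at the level of a single near-worst-case trajectory: take $(u,v,x)\in\mathcal{B}_p(\mu_1\times S)$ realising $L_1(p,\mu_1)$ up to $\varepsilon$, invoke the feedback-refinement relation \cite[Th.~V.4]{ReissigWeberRungger17} to lift it to $(u,v,\Omega)\in\mathcal{B}_{\Omega(0)}(\mu_1'\times S')$ with $x(t)\in\Omega(t)$, bound each running term via \eqref{th:soc:v}, and bound the terminal term $L_2(x(\tau),\mu_2)$ by $L_2'(\Omega(\tau),\mu_2')$ for the \emph{single} lifted cell $\Omega(\tau)$; the resulting expression is then $\le J_1'(u,v,\Omega)\le L_1'(\Omega(0),\mu_1')=V'(\Omega(0))$. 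What this trajectory-based route buys is that no global verification of \eqref{th:soc:iv} is needed --- only the bound at the one concretely constructed switch cell $\Omega(\tau)$. The per-cell inequality you isolate as the ``hard part'' is precisely the paper's property~c), for which the paper appeals to Theorem~\ref{th:soc}; your caution that for genuinely overlapping covers this step requires tracking which cell the quantizer selects at the phase switch is well placed, and the partition case you single out is indeed where both arguments go through without further comment.
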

\begin{proof}
Let $\Pi_2$, $L_1$, $L_2$ be as in Proposition \ref{p:closedloopperformance}
when applied to $\Pi$ with $\mu_i := \mu_i' \circ Q$, $i \in \{1,2\}$.
Firstly, for $p \in X$ it follows that $L(p,\mu_Q) = L_1(p,\mu_1)$. 
Secondly, fix $\varepsilon>0$. There exists $(u,v,x) \in \mathcal{B}_p(\mu_1 \times S)$, where $S= (X,U,F)$, 
such that for $\tau = \min v^{-1}(1)$ the value $L_1(p,\mu_1)$ is less than\looseness=-1
\begin{equation}
\label{e:th:refinement:proof}
\sum_{t=0}^{\tau-1} g(x(t),x(t+1),u(t)) + L_2(x(\tau),\mu_2) + \varepsilon.
\end{equation}
Next, by \cite[Th.~V.4]{ReissigWeberRungger17} 
there exists $\Omega \in (X')^{\Z_+}$ satisfying a) 
$(u,v,\Omega) \in \mathcal{B}_{\Omega(0)}(\mu_1' \times S')$, 
b) $x(t) \in \Omega(t)$ for all $t \in \mathbb{Z}_+$ and
\ifx\arxivVersion\undefined
\linebreak{}
\fi
c) $L_2(x(\tau),\mu_2) \leq L_2'(\Omega(\tau),\mu_2')$, 
where $S'=(X',U',F')$ and $L_2'$ denotes 
the closed-loop performance of the optimal control problem $\Pi_2'$ defined 
in line \ref{alg:twophase:firstproblem} of Fig.~\ref{alg:twophase} when applied to $\Pi'$.  
Property c) follows from Theorem \ref{th:soc} applied to $\Pi_2$ and $\Pi_2'$.
Finally, apply Proposition \ref{p:closedloopperformance} to 
$\Pi'$, $\mu'_1$, $\mu_2'$ in place of 
$\Pi$, $\mu_1$, $\mu_2$ and denote by $L_1'$ and $L_2'$ the closed-loop performances appearing Prop.~\ref{p:closedloopperformance} for this case.
By Theorem \ref{th:soc} and properties b), c) above, \eqref{e:th:refinement:proof} 
is bounded by 
\ifx\arxivVersion\undefined
$\sum_{t=0}^{\tau-1} g'(\Omega(t),\Omega(t+1),u(t)) + L_2'(\Omega(\tau),\mu_2') + \varepsilon$. 
\else 
\begin{equation}
\sum_{t=0}^{\tau-1} g'(\Omega(t),\Omega(t+1),u(t)) + L_2'(\Omega(\tau),\mu_2') + \varepsilon.
\end{equation} 
\fi
The latter sum is less than $L_1'(\Omega(0),\mu_1') + \varepsilon$ by property a) above. Apply Theorem \ref{th:main} to $\Pi'$ and $\mu'_1$, $\mu'_2$ to see that the last sum equals $V'(\Omega(0)) +\varepsilon$. It follows $L_1(p,\mu_1) \leq V'(\Omega(0))$ by letting $\varepsilon \to 0$, which implies \eqref{e:th:refinement}.
\end{proof}
\section{Examples}
\label{s:examples}
The theoretical results are illustrated by experimental evaluations 
on two examples. 
In the first, the example already appeared in 
Section \ref{s:introduction} will be discussed in detail while in the second a multiple 
application of Fig.~\ref{alg:approximate} is considered. 
\subsection{Urban parcel delivery with vehicle model}
\label{ss:examples:vehicle}
\subsubsection{Control problem} 
A variant of the classical vehicle model \cite[Ch.~2.4]{AstromMurray08} is considered, 
whose dynamics is described by \eqref{e:system:cont} with 
$W=\{(0,0)\} \times \intcc{-\frac{1}{100},\frac{1}{100}} \times \intcc{-\frac{1}{10},\frac{1}{10}}$,
\begin{equation}
\label{e:examples:vehicle:dynamics}
f((x_1,\ldots,x_4),(u_1,u_2)) = \begin{pmatrix}
x_4 \cdot \cos(\alpha + x_3) \cdot \beta \\
x_4 \cdot \sin(\alpha + x_3) \cdot \beta \\
x_4 \cdot \tan(u_2) \\
u_1
\end{pmatrix},
\end{equation}
where $\alpha = \arctan(\tan(u_2)/2)$, 
$\beta =  \cos(\alpha)^{-1}$.
Here, $(x_1,x_2)$ describes the position, 
and $x_3$ and 
$x_4$ describe the orientation and 
the velocity of the vehicle, respectively. 
The control inputs $u_1$ and $u_2$
are the acceleration and the steering angle of the vehicle, 
respectively.
Formally, we consider the system $(\mathbb{R}^4,U,F)$
defined as the sampled system \cite[Def.~VIII.1]{ReissigWeberRungger17} 
associated with \eqref{e:system:cont} and sampling time $\tau = 0.1$, 
where $f$ is given by \eqref{e:examples:vehicle:dynamics}, 
$U = \intcc{-6,4} \times \intcc{-0.5,0.5}$.

The vehicle is located in the urban environment 
shown in Fig.~\ref{fig:intro}. 
We are going to solve 
the quantitative two-phase reach-avoid problem 
$\Pi$ of the form \eqref{e:ocp} 
associated with 
$A_1$, 
$A_2$ and 
$G_0$, 
where $G_0$ %
is the zero function and 
\begin{align*}
A_1 &= \intcc{33,41} \times \intcc{17,21} \times \intcc{-\pi,\pi} \times \intcc{0,7}, \quad \text{ (``Area 1")}\\
A_2 &= \intcc{51,55} \times \intcc{22,30} \times \intcc{-\pi,\pi} \times \intcc{0,7}. \quad \text{ (``Area 2")}
\end{align*}
The running cost $g$ satisfies $g(x,y,u) = \infty$ if $x$ violates the common right-hand traffic rules or is in the obstacle set 
$(\mathbb{R}^4 \setminus \bar X ) \cup O_1 \cup \ldots \cup O_4$ which restricts space and velocity. Here,
\begin{subequations}
\begin{align}
\bar X &=\intcc{0,64} \times \intcc{0,30} \times \intcc{-\pi,\pi} \times \intcc{0,18}, 
\label{e:examples:vehicle:X}
\\
O_1 &= \intcc{0,10} \times \intcc{0,11} \times \intcc{-\pi,\pi} \times \intcc{0,18},\\
O_3 &= \intcc{18,33} \times \intcc{8,22} \times \intcc{-\pi,\pi} \times \intcc{0,18}
\end{align}
\end{subequations}
and $O_2 = (0,19,0,0) + O_1$, $O_4 = (23,0,0,0) + O_3$.
The traffic rules are not violated if, e.g., $x$ is in the set
\begin{equation*}
\intcc{10,64} \times \intcc{0,4} \times \intcc{-3\pi/8, 3\pi/8} \times \intcc{0,18},
\end{equation*}
in which case $g$ compromises minimum time and proper driving style
by satisfying
\[
g(x,y,u) = \tau + u_2^2 + \min_{m \in M}\|(y_1,y_2) - m \|_2.
\]
Here, $\|\cdot \|_2$ is the Euclidean metric on $\R^2$ and 
$M \subseteq \mathbb{R}^2$ describes the axis of each roadway. 
E.g.
$
\intcc{12,62} \times \{2\} \subseteq M$.
\subsubsection{Approximate solution} 
To solve $\Pi$ approximately, 
we apply the procedure in Fig.~\ref{alg:approximate}. 
We use 
\cite[Sect.~VIII]{ReissigWeberRungger17} 
in combination 
with the technique in 
\cite[Sect.~IV]{WeberKreuzerKnoll20}
to compute a discrete abstraction $(X',U',F')$ 
for the sampled system, 
i.e. to perform 
line \ref{alg:approximate:abstraction} in 
Fig.~\ref{alg:approximate} and 
to solve the two abstract optimal control problems, 
i.e. lines \ref{alg:twophase:firstproblem} and \ref{alg:twophase:secondproblem} in Fig.~\ref{alg:twophase} invoked by 
line \ref{alg:approximate:apply} in Fig.~\ref{alg:approximate}.
The synthesis is successful. A closed-loop trajectory is shown in Fig.~\ref{fig:examples:vehicle}. 
\ifx\arxivVersion\undefined
\else
\begin{figure}
\begin{center}
\hspace*{1.3cm}
\input{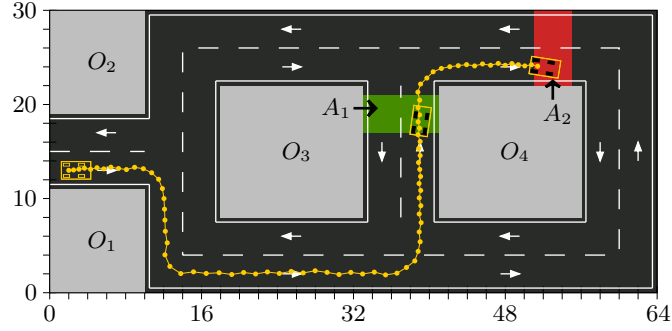}
\end{center}
\caption{\label{fig:examples:vehicle}Illustration of a closed-loop trajectory as obtained from interconnecting the plant in Section \ref{ss:examples:vehicle} with an approximately optimal controller for the given quantitative two-phase reach-avoid problem. The vehicle symbols indicate the initial state $(2,13,0,5)$ and the states where switching of controllers occurs (cf. Fig~\ref{fig:controller}), respectively.}
\end{figure}
\fi
The cost \eqref{e:costfunctional} for the shown trajectory is $31.78$ whereas the corresponding cost of the closed-loop trajectory shown in Fig.~\ref{fig:intro} is $24\%$ higher ($39.50$). 
Some more computational details are given in Tab.~\ref{tab:examples:vehicle}.
\begin{table}[h]
\centering
\begin{tabular}{|l|l|}
\hline
Quantity & Value (description) \\
\hline 
\hline
$|X'|$ & $\approx 21.2\cdot 10^6$ (subdivision of  \eqref{e:examples:vehicle:X} into\\
& $120\cdot 57\cdot 62 \cdot 50$ hyper-rectangles) \\
$|U'|$ & $99$ ($9\cdot 11$ values of $U$) \\
\hline 
Runtime line \ref{alg:twophase:firstproblem} in Fig.~\ref{alg:twophase} & $590$ sec. \\
Runtime line \ref{alg:twophase:secondproblem} in Fig.~\ref{alg:twophase} & $412$ sec. \\
Total runtime (Fig.~\ref{alg:approximate}) & $16.7$ min. \\
Total RAM usage & $9.9$ GB \\
\hline 
\end{tabular}
\caption{\label{tab:examples:vehicle}Computational details to Section \ref{ss:examples:vehicle}. 
Runtimes refer to a computation in parallel with 24 cores (Intel Xeon E5-2697 @ 2.6 GHz) on a Linux OS using an implementation of Fig.~\ref{alg:approximate} in C.}
\ifx\arxivVersion\undefined
\vspace{-3ex}
\else
\fi
\end{table}
\subsection{Routing mission with fixed-wing aircraft model}
\label{ss:examples:routing}
\ifx\arxivVersion\undefined
\else
\begin{figure}[b]
\centering
\includegraphics[scale=.92]{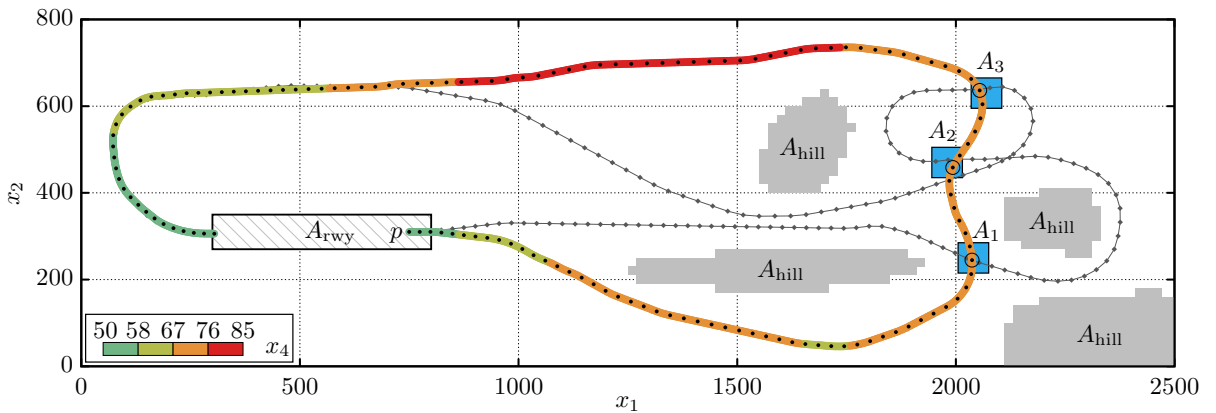}
\caption{\label{fig:waypointrouting}Routing mission from Section \ref{ss:examples:routing}. 
Closed-loop trajectories starting at $p = (750,310,0^\circ,52.5)$ are illustrated, where the multicoloured one is obtained from results of this work while the gray-coloured one is obtained from naively solving the given problem through four classical reach-avoid problems (Definition \ref{def:reachavoid}). At the circled points ($\odot$) of the multicoloured trajectory the switching to the next controller occurs (cf. Fig.~\ref{fig:controller}).}
\end{figure}
\fi
Next, the powerfulness of our results is demonstrated 
by considering a control problem for a fixed-wing aircraft, 
whose mission is to fly 
to three ``areas of interest" consecutively
after departure from the airfield 
and then back to it. A prescribed cost function is 
to be minimized.
Hence, 
a more general optimal control problem than in
Definition \ref{def:tworeachavoid} is considered and 
we will use the results of Section \ref{s:main}
formally as a heuristic only. Nevertheless, the result
is remarkable.

To anticipate the outcome straightaway, 
consider Fig.~\ref{fig:waypointrouting}:
The controller obtained by the results of this work 
requires for the mission $140$ time steps 
(the cost is $70.34$ as clarified later) 
leading to the multicoloured trajectory in Fig.~\ref{fig:waypointrouting}. 
In contrast, a sequence of ``locally" optimal controllers 
requires $184$ time steps (the cost is $36\%$ higher) 
leading to the entangled trajectory 
that is also indicated. 
The details to the control problem 
are given below.
\subsubsection{Control problem}
The plant is a four-dimensional fixed-wing aircraft model.
The states are the planar position $(x_1,x_2)$, 
heading and velocity ($x_3$ and $x_4$, respectively) of the aircraft. 
The inputs are the thrust and 
the bank angle ($u_1$ and $u_2$, respectively) of the aircraft.
The function $f$ in \eqref{e:system:cont} is given by $f_1$ in \cite[Eq.~14]{WeberKreuzerKnoll20}, see also \cite[Sect.~3]{GloverLygeros04}.
As in Section \ref{ss:examples:vehicle} 
we formally consider 
the sampled system $(\mathbb{R}^4,U,F)$ 
associated with \eqref{e:system:cont} and 
sampling time $\tau = 0.45$ 
with $W=\{0\}$, and 
$U = \intcc{0,18000} \times \intcc{-40^\circ,40^\circ}$. 

The sets defining the mission are given in Tab.~\ref{tab:routing}.
The trajectory cost \eqref{e:trajectorycost} is zero 
if the sets are visited in the order 
$A_1$, $A_2$, $A_3$, $A_\mathrm{rwy}$ while
the running cost \eqref{e:runningcost} is given by\looseness=-1
\begin{equation*}
g(x,y,u) = \begin{cases} 
\infty, & \text{if }x \in (\mathbb{R}^4 \setminus X_{\mathrm{mis}}) \cup O\\ \tau + u_2^2, & \text{otherwise ($u_2$ in radians)} 
\end{cases}.
\end{equation*}
Thus, the cost to minimize is a compromise 
between small bank angles and minimum time to accomplish mission.

To solve this mission, it is formalized by 
two quantitative two-phase reach-avoid problems 
$\Pi_1$ and $\Pi_2$ given by \eqref{e:ocp} with $G_i$ in place of $G$ defined as follows: 
\begin{itemize}[-]
\item $\Pi_2$ is associated with $A_3$, $A_\mathrm{rwy}$ and the zero function;
\item $\Pi_1$ is associated with $A_1$, $A_2$ and $V_2$, where $V_2$ is the value function of $\Pi_2$.
\end{itemize}
\begin{table}
\ifx\arxivVersion\undefined
\else
\centering
\begin{tabular}{|lll|}
\hline
Symbol & Value & Meaning \\
\hline 
\hline 
$X_{\mathrm{mis}}$ & $\intcc{0,2500} \times \intcc{0,800} \times \mathbb{R} \times \intcc{50,85}$ & Mission area and admissible heading and velocity \\
\hline
$A_1$ & $\intcc{2005,2075} \times \intcc{215,285} \times \times \mathbb{R} \times \intcc{50,75}$ & 1\textsuperscript{st} area of interest and admissible fly-over velocity\\
\hline
$A_2$ & $\intcc{1945,2015} \times \intcc{435,505} \times \mathbb{R} \times \intcc{50,75}$ & 2\textsuperscript{nd} area of interest and admissible fly-over velocity \\
 \hline
$A_3$ & $\intcc{2035,2105} \times \intcc{595,665} \times \mathbb{R} \times \intcc{50,75}$ & 3\textsuperscript{rd} area of interest and admissible fly-over velocity \\
 \hline
$A_\mathrm{rwy}$ & $\intcc{300,800} \times \intcc{270,350} \times \intcc{-10^\circ,10^\circ} \times \intcc{50,55}$ & Runway and admissible heading and velocity for landing\\
\hline 
$A_\mathrm{nofly}$ & $\intcc{320,780} \times \intcc{290,330} \times \intcc{12^\circ,348^\circ} \times \mathbb{R}$ & Illegal aircraft states over runway  \\
\hline
$A_\mathrm{hill}$ & $\subseteq \mathbb{R}^2$, see Fig.~\ref{fig:waypointrouting} & Spatial obstacle set \\
\hline 
$O$ & $(A_\mathrm{hill} \times \mathbb{R}^2) \cup A_\mathrm{nofly}$ & Overall obstacle set\\
\hline 
\end{tabular}
\fi
\caption{\label{tab:routing}Sets defining the optimal control problem in Section \ref{ss:examples:routing}.}
\ifx\arxivVersion\undefined
\vspace{-3ex}
\else
\fi
\end{table}
\subsubsection{Approximate solution}
For the approximate solution we use 
two abstract optimal control problems 
$(X',U',F',G'_i,g')$, $i \in \{1,2\}$, 
where the discrete abstraction 
(line \ref{alg:approximate:abstraction} 
in Fig.~\ref{alg:approximate}) is computed 
exactly as in \cite[Sect.~VI]{WeberKreuzerKnoll20}, e.g.
$|X'| \approx 141.7 \cdot 10^6$, $|U'| = 35$. 
The non-trivial cases in the definition of $G_1'$ and $G_2'$ equal the value function of $\Pi_2'$ and the zero function, respectively.
The total runtime and RAM usage for solving $\Pi_1'$ and $\Pi_2'$ is $3$ hours and $52$ GB using the technique in \cite{WeberKreuzerKnoll20} 
(hardware as in Section \ref{ss:examples:vehicle}).
\section{Conclusions and Outlook}
\label{s:conclusions}
From the theoretical point of view this work provides 
the structure of an optimal controller for solving 
two-phase reach-avoid problems on time-discrete nonlinear systems. 
In addition, a method to synthesize 
approximately optimal controllers 
on sampled-data systems has been deduced. 

Practicality of our results has been proved by simulations. 
In particular, our theoretical contributions 
proved to be useful also as 
a heuristic to solve approximately optimal control problems 
where several target sets must be visited.

Many open questions remain. 
Obviously, the structure of an optimal 
controller for several target sets is of interest 
as a generalization of Proposition 
\ref{p:closedloopperformance} and Theorem \ref{th:main}. 
Another open question is if a necessary condition for the optimality of such controllers can be formulated.
An even more general open problem is to generalize Theorem \ref{th:soc}
to optimal control problems with trajectory cost (rather than terminal cost). 
\bibliography{}
\end{document}